\makeatletter\@namedef{subjclassname@2020}{\textup{2020} Mathematics Subject Classification}\makeatother
\newtheorem{theorem}{Theorem}[section]
\newtheorem{lemma}[theorem]{Lemma}
\newtheorem{corollary}[theorem]{Corollary}
\newtheorem{proposition}[theorem]{Proposition}
\theoremstyle{definition}
\newtheorem{remark}[theorem]{Remark}
\newtheorem{example}[theorem]{Example}
\numberwithin{equation}{section}
\numberwithin{table}{section}
\numberwithin{figure}{section}
\newcommand{\tr}{\operatorname{tr}}
\newcommand{\Comp}{\mathbb{C}}
\DeclareMathOperator{\ii}{i}
\DeclareMathOperator{\var}{var}
\DeclareMathOperator{\mean}{mean}
\newcommand{\BC}{{\mathbb C}}
\newcommand{\CC}[1]{{\mathbb C}^{#1\times #1}}
\newcommand{\BN}{{\mathbb N}}
\newcommand {\mat}  [1] {\left[\begin{array}{#1}}
\newcommand {\rix}      {\end{array}\right]}
\newcommand{\norm}[1]{ \left\| #1 \right\| }
\newcommand{\bignorm}[1]{ \biggl\| #1 \biggr\| }
\font\tenex=cmex10 
\newdimen\p@renwd
\def\bmat#1{\begingroup \m@th
  \setbox\z@\vbox{\def\cr{\crcr\noalign{\kern2\p@\global\let\cr\endline}}%
    \ialign{$##$\hfil\kern2\p@\kern\p@renwd&\thinspace\hfil$##$\hfil
      &&\quad\hfil$##$\hfil\crcr
      \omit\strut\hfil\crcr\noalign{\kern-\baselineskip}%
      #1\crcr\omit\strut\cr}}%
  \setbox\tw@\vbox{\unvcopy\z@\global\setbox\@ne\lastbox}%
  \setbox\tw@\hbox{\unhbox\@ne\unskip\global\setbox\@ne\lastbox}%
  \setbox\tw@\hbox{$\kern\wd\@ne\kern-\p@renwd\left[\kern-\wd\@ne
    \global\setbox\@ne\vbox{\box\@ne\kern2\p@}%
    \vcenter{\kern-\ht\@ne\unvbox\z@\kern-\baselineskip}\,\right]$}%
  \null\;\vbox{\kern\ht\@ne\box\tw@}\endgroup}
\def\real{\operatorname{Re}}
\def\imag{\operatorname{Im}}
\title[The Sz\'asz inequality for matrix polynomials]{The Sz\'asz inequality for matrix polynomials \mbox{and functional calculus}}
\author[P. Pikul]{Piotr Pikul}
\address{Piotr Pikul\\Instytut Matematyki,
Wydzia\l\ Matematyki i~Informatyki\\Uniwersytet Jagiello\'{n}ski\\
ul.\ \L{}ojasiewicza 6\\30-348 Krak\'{o}w\\Poland\\
ORCiD: 0000-0001-7461-0248}
\email{piotr.pikul@im.uj.edu.pl}
\author[O.\,J. Szymański]{Oskar Jakub Szymański}
\address{Oskar Jakub Szymański\\Instytut Matematyki,
Wydzia\l\ Matematyki i~Informatyki\\Uniwersytet Jagiello\'{n}ski\\
ul.\ \L{}ojasiewicza 6\\30-348 Krak\'{o}w\\Poland}
\email{oskar.szymanski@doctoral.uj.edu.pl}
\author[M. Wojtylak]{Michał Wojtylak}
\address{Michał Wojtylak\\Instytut Matematyki,
Wydzia\l\ Matematyki i~Informatyki\\Uniwersytet Jagiello\'{n}ski\\
ul.\ \L{}ojasiewicza 6\\30-348 Krak\'{o}w\\Poland\\
ORCiD: 0000-0001-8652-390X}
\email{michal.wojtylak@uj.edu.pl}
\begin{document}
\begin{abstract}
The Szász inequality is a classical result that provides a bound for polynomials with zeros
in the upper half of the complex plane, expressed in terms of their low-order coefficients.
Generalizations of this result to polynomials in several variables have been obtained by Borcea-Brändén and Knese.

In this article, we discuss the Szász inequality in the context of polynomials
with matrix coefficients or matrix variables. In the latter case, the estimation provided by
the Szász-type inequality can be sharper than that offered by the von Neumann inequality.

As a byproduct, we improve the scalar Szász inequality by relaxing the assumption regarding the location of zeros.

\end{abstract}
\keywords{Sz\'asz inequality, stable polynomial, matrix polynomial, von Neumann inequality, multivariable functional calculus}
\subjclass[2020]{Primary 15A45; Secondary 15A60, 47A60}

\maketitle

\subsubsection*{Statements and Declarations} No potential conflict of interest was reported by the authors.
\section{Introduction}

A polynomial is called {\em stable} if all its zeros lie outside the open upper half-plane.
O.~Sz\'asz \cite{szasz1943} discovered an inequality bounding a stable
polynomial $p(\lambda) = a_d \lambda^d + a_{d-1} \lambda^{d-1} + \ldots + a_1 \lambda  + 1 \in \Comp[\lambda]$
in terms of its first few coefficients:
\[
|p(\lambda )| \leq \exp\bigl(|a_1| |\lambda | + 3(|a_1|^2 + |a_2|)|\lambda |^2\bigr).
\]

Later on, the inequality was improved by de Branges \cite[Lemma 5]{deB61} to
\begin{equation}\label{dB}
|p(\lambda )| \leq \exp\bigl(\real(a_1 \lambda)  + {\textstyle\frac12} (|a_1|^2 -2 \real(a_2))|\lambda |^2\bigr),\quad \lambda \in\Comp.
\end{equation}
Knese in \cite[Theorem 1.3]{Kne19} provided a simplified proof and showed the sharpness
of the bound on the imaginary axis. The elegant presentation in \cite{Kne19} actually
shows that stability is not a necessary condition for \eqref{dB} to hold.
We will elaborate on that in Theorem~\ref{scalar+semis}, Proposition~\ref{pro-random},
and Examples~\ref{ex:semis1} and \ref{ex:semis2}.

Except for its own beauty, the Sz\'asz inequality is worth considering as it is applied in several places in analysis.  
It assures normality of some families of entire functions. As a consequence, 
it allows for a complete characterisation of the local uniform limits of stable polynomials of one variable \cite[Chapter VIII, Theorem 4]{Lev80}. Furthermore, Borcea–Brändén \cite{BorB09} used a multivariate version of the inequality to characterise linear operators on the space of polynomials that preserve stability. 
Having said this, it seems natural to ask how this inequality behaves for matrix polynomials and matrix variables.

The first result for matrix polynomials was already established in \cite{szymanski2023stability} as Proposition 4.5. Namely, if the numerical range of the matrix polynomial $P(\lambda)$ is contained in some half-plane $H_\varphi$, $\varphi \in [0,2\pi)$, then 
\begin{equation}\label{lH}
\| P(\lambda) \| \leq 2\exp \Bigl[
\lambda_H\bigl(\lambda A_1 -|\lambda|^2 A_2\bigr)
+\frac12 |\lambda|^2 \norm{A_1}^2 
\Bigr], \quad \lambda \in \Comp,
\end{equation}
where $\lambda_H(X)$ denotes the largest eigenvalue of the matrix $\real X=\frac{X+X^*}2$ (equivalently:
the maximal real coordinate of the numerical range).

As shown in Example~\ref{ex:hyperstable} below, in general, (hyper)stability of a matrix polynomial
is not sufficient to ensure a bound of the norm based exclusively on the low-order coefficients.
However, it appears that by additionally assuming a factorisation of the polynomial into degree-one terms, we obtain the inequality
\[
\norm{P(\lambda)}_F \leq 
n^{\frac d2}\exp\Bigl(\frac{1}{n}\tr\real(\lambda A_1)
+ \frac{1}{2n}\big(\norm{A_1}_F^2 - 2\tr\real A_2\big)|\lambda|^2\Bigr),
\]
see Theorem~\ref{thm-fact1} below.

In this way, our work connects with the topic considered extensively by Harm Bart, Rien Kaashoek, and Israel Gohberg; see \cite{BGK1979,bart1988complementary} and \cite{bart2007factorization}. The latter book contains a seminal formulation of necessary and sufficient conditions for a rational matrix-valued function $W(\lambda)$ to have a factorisation into rank-one elementary factors. We apply this approach to $W(\lambda)=P(1/\lambda)$, obtaining a closed form of the Sz\'asz inequality in terms of its
system realisation 
\[
P(\lambda)=I+\lambda C(I-\lambda A)^{-1} B,
\]
see Theorem~\ref{ABCISzasz}.

Our work on functional calculus is motivated by the von Neumann inequality
\[
\norm{ p(T)}\leq \sup_{|z|\leq \norm T} |p(z)|,
\]
which holds for any polynomial $p$ (matrix or scalar) and any operator $T$.
Despite a large interest in generalising this result
(see \cite{Kosin,Kne16,hartz2023ax,HarR22} for the works most related to the current paper), there is little known about a possible interplay between the properties of $p$ and $T$ that might lead to an improved bound.
In Theorems~\ref{pro-funct1} and \ref{pro-funct2} below,
we estimate the norm of $p(A)$ for a matrix $A$ and a stable
scalar polynomial $p$ in three independent ways:
\begin{align*}
\norm{p(A)}_F \leq&\
n^{\frac d2}\exp\Bigl(\frac{1}{n}\tr\real(a_1 A)
+ \frac{1}{2n}(|a_1|^2 - 2\real a_2)\norm A _F^2\Bigr), \\
\norm{p(A)} \leq&\ \exp\Bigl(\norm A \sqrt{d(|a_1|^2 - 2\real a_2)}\Bigr), \\
\norm{p(A)} \leq&\ \exp\bigl(|a_1|\norm A  + {\textstyle\frac12} (|a_1|^2 -2 \real(a_2))\norm A^2\bigr).
\end{align*}
The first of these inequalities presents the aforementioned interplay between $T$ and the coefficients of $p$. Specifically, since $-a_1$ is the sum of the reciprocals of the roots of $p(\lambda)$, the factor $\exp(\operatorname{tr} \operatorname{Re}(a_1 A))$ might significantly improve the bounds due to the location of the roots and the eigenvalues of $A$.
We demonstrate in Example~\ref{comparison} that the first inequality provides in some cases a better estimation than the von Neumann inequality.


In addition to these considerations, in the last section of the paper we briefly extend the results to scalar and matrix polynomials in commuting matrix variables.

 \section{Preliminaries}

Throughout this article we denote the field of complex numbers by $\Comp$.
By $\BN$ we mean positive 
integers.
Symbol $\CC{n}$ stands for the set of square $n\times n$ matrices with
complex entries. The set of matrix polynomials of $k$ variables with
square $n\times n$ coefficients will be denoted by $\CC{n}[z_1,\ldots,z_k]$.
Matrix polynomials will be denoted by capital letters to be easily distinguishable from
scalar ones.

A scalar polynomial $p\in\Comp[z_1,\ldots,z_k]$ is called stable with respect to
the set $D\subseteq \Comp^k$ if all its zeros lie outside $D$. If the set
is not specified explicitly, we assume it is a product of upper half planes
(i.e.\ $D=H_0^k=\{z\in\Comp^k: \imag(z_j)\geq 0,\ j=1,\ldots,k\}$).
For a matrix polynomial $P\in\CC{n}[\lambda]$ stability with respect to $D\subseteq \Comp$ means that
$P(\lambda)$ is invertible for every $\lambda \in D$. Point $\lambda\in\Comp$ that
$\ker P(\lambda)\neq\{0\}$ is called \emph{eigenvalue} of the matrix polynomial $P$.
The numerical range of $P(\lambda)$ is defined as
$$
\{\lambda\in\Comp: x^* P(\lambda)x=0\text{ for some } x\in\Comp^n\setminus\{0\}\}.
$$

For a matrix $A\in \CC{n}$ we denote its standard operator norm
by $\|A\|:=\sup\big\{\|Ax\|: x\in\Comp^n,\ \|x\|=1\big\}$.
We will also use the Frobenius norm $\norm{A}_F:=\sqrt{\tr (A^*A)}$
and the matrix imaginary part (skew part of the matrix) $\imag A:=\frac{1}{2i}(A-A^*)$. 
For a vector $z\in \Comp^n$ we denote its standard euclidean norm by $\norm{z}$, and $\norm{z}_\infty$
stands for the supremum norm.

We provide now several lemmas, which will be used later on. Note that for
$A \in \CC{n} \setminus \{-I\}$ one easily obtains 
\begin{equation}\label{imm}
 \log\norm{I + A}\leq \norm A. 
\end{equation}
We present now a more subtle estimate.
\begin{lemma}\label{mlog}
 Let $A \in \CC{n} \setminus \{-n^{-1/2}I\}$. Then
\[
\log\norm{\frac{1}{\sqrt{n}}I + A}_F \leq \frac{1}{\sqrt{n}}\tr\real A + \frac{1}{2}\norm A _F^2\text{.}
\]    
\end{lemma}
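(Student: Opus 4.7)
The plan is to reduce the statement to the elementary scalar inequality $\log(1+x)\leq x$ (valid for $x>-1$) by directly expanding the Frobenius norm.

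First I would compute
\[
\bignorm{\tfrac{1}{\sqrt n}I + A}_F^2
= \tr\!\Bigl(\bigl(\tfrac{1}{\sqrt n}I + A^*\bigr)\bigl(\tfrac{1}{\sqrt n}I + A\bigr)\Bigr)
= 1 + \tfrac{2}{\sqrt n}\tr\real A + \norm A_F^2,
\]
using $\tr(\frac{1}{n}I)=1$, $\tr(A+A^*) = 2\tr\real A$ and $\tr(A^*A)=\norm A_F^2$.

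Next I would take $\frac12\log$ of both sides and apply $\log(1+x)\leq x$ with $x = \frac{2}{\sqrt n}\tr\real A + \norm A_F^2$. The hypothesis $A\neq -n^{-1/2}I$ ensures $\bignorm{\tfrac{1}{\sqrt n}I+A}_F^2>0$, hence $1+x>0$, so the elementary inequality applies and gives exactly
\[
\log\bignorm{\tfrac{1}{\sqrt n}I + A}_F
\leq \tfrac{1}{\sqrt n}\tr\real A + \tfrac{1}{2}\norm A_F^2.
\]

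There is essentially no obstacle here: the main content is the identity in the first display, and the second step is a one-line use of the concavity of the logarithm. The only point worth flagging is that the condition $A \neq -n^{-1/2}I$ is precisely what is needed to keep the argument of the logarithm positive, which is why it appears as a standing assumption in the lemma.
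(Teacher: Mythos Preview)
Your proof is correct and follows essentially the same approach as the paper: expand the squared Frobenius norm to get $1 + \tfrac{2}{\sqrt n}\tr\real A + \norm A_F^2$, then apply $\log(1+x)\leq x$. The paper's argument is identical in all essential respects.
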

\begin{proof} The proof is based on a direct computation:
\begin{align*}
\log\norm{\frac{1}{\sqrt{n}}I + A}_F
&= \frac{1}{2}\log\norm{\frac{1}{\sqrt{n}}I + A}_F^2 \\
&=\frac{1}{2}\log\tr\Bigl[\Bigl(\frac{1}{\sqrt{n}}I + A\Bigr)^*\Bigl(\frac{1}{\sqrt{n}}I + A\Bigr)\Bigr] \\
& = \frac{1}{2}\log\tr\Big(\frac{1}{n}I + \frac{1}{\sqrt{n}}A + \frac{1}{\sqrt{n}}A^* + A^*A\Big)\\
&= \frac{1}{2}\log\Big(1 + \frac{2}{\sqrt{n}}\tr\real A + \norm A _F^2\Big)
\end{align*}
Now it is enough to use the scalar inequality $\log(1 + x) \leq x$, $x \in (-1; \infty)$.
\[
\log\norm{\frac{1}{\sqrt{n}}I + A}_F
\leq  \frac{1}{2}\Big(\frac{2}{\sqrt{n}}\tr\real A + \norm A _F^2\Big)
= \frac{1}{\sqrt{n}}\tr\real A + \frac{1}{2}\norm A _F^2\text{.}
\]
\end{proof}

Knese showed in \cite[Proof of Lemma 2.1]{Kne19} that for $b_1,\dots,b_d\in\Comp$ satisfying
$\sum_{1 \leq j < k \leq d}\imag b_j\imag b_k\geq 0$ (equivalently, satisfying \eqref{var-semis} below) one has
\begin{equation}\label{scalarv}
    \sum_{j=1}^d |b_j|^2 \leq
\Big|\sum_{j=1}^d b_j\Big|^2 - 2\real\sum_{1 \leq j < k \leq d}b_jb_k.
\end{equation}
We present a matrix version of this inequality.

\begin{lemma}\label{sums}
Let $B_1, B_2, \dots , B_d$ be $n \times n$ complex matrices such that
\begin{equation}\label{sumim}
\sum_{1 \leq j < k \leq d}\tr(\imag B_j\imag B_k)\geq 0.
\end{equation}
Then
\[
\sum_{j=1}^d \norm{B_j}_F^2 \leq
\bignorm{\sum_{j=1}^d B_j}_F^2 - 2\tr\real\sum_{1 \leq j < k \leq d}B_jB_k\text{.}
 \]
\end{lemma}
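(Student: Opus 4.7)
The plan is to mirror Knese's scalar argument behind \eqref{scalarv}, with complex conjugation replaced by Hermitian adjoint and scalar products replaced by traces of matrix products.

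First I would expand the Frobenius norm square on the right-hand side:
\[
\Big\|\sum_{j=1}^d B_j\Big\|_F^2 = \sum_{j,k=1}^d \tr(B_j^* B_k) = \sum_{j=1}^d \|B_j\|_F^2 + 2\real \sum_{1 \le j<k \le d} \tr(B_j^* B_k),
\]
where the second equality uses $\tr(B_k^* B_j) = \overline{\tr(B_j^* B_k)}$ to pair the off-diagonal terms. Substituting this into the claimed inequality and cancelling $\sum_j\|B_j\|_F^2$ on both sides reduces the statement to
\[
0 \le 2\real \sum_{1 \le j<k \le d} \tr\bigl((B_j^* - B_j) B_k\bigr).
\]

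Next I would use $B_j^* - B_j = -2\ii\, \imag B_j$ together with the following trace identity, which is the key place where the Hermitian nature of $\imag B_j$ enters: for any Hermitian matrix $H$ and any $X \in \CC{n}$, both $\tr(H\real X)$ and $\tr(H\imag X)$ are real, and therefore $\imag \tr(HX) = \tr(H\imag X)$. Applied with $H = \imag B_j$ and $X = B_k$, and using $\real(-2\ii z) = 2\imag z$, this converts the reduced inequality into
\[
2\real \sum_{j<k} \tr\bigl((B_j^*-B_j) B_k\bigr) = 4 \sum_{j<k} \tr(\imag B_j\, \imag B_k),
\]
and the latter is nonnegative by hypothesis \eqref{sumim}.

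The argument is mostly bookkeeping; the one place where the matrix structure has to be handled carefully is the trace identity $\imag \tr(HX) = \tr(H\imag X)$ for Hermitian $H$, which is exactly what allows $\sum_{j<k} \imag b_j \imag b_k$ in the scalar proof to be replaced by $\sum_{j<k}\tr(\imag B_j\,\imag B_k)$ here. I do not anticipate any deeper obstacle beyond keeping track of real versus imaginary parts in the expansion.
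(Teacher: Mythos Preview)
Your proof is correct and follows essentially the same route as the paper's: expand $\bigl\|\sum_j B_j\bigr\|_F^2$, reduce the claim to $2\real\sum_{j<k}\tr\bigl((B_j^*-B_j)B_k\bigr)\ge 0$, and then identify this quantity with $4\sum_{j<k}\tr(\imag B_j\,\imag B_k)$. The only cosmetic difference is that the paper obtains the last identity by expanding all four trace terms and factoring as $\tr\bigl((B_j-B_j^*)(B_k^*-B_k)\bigr)$, whereas you invoke the identity $\imag\tr(HX)=\tr(H\,\imag X)$ for Hermitian $H$; these are the same computation.
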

\begin{proof}
Observe that
\begin{align*}
\bignorm{\sum_{j=1}^d B_j}_F^2
&= \tr\biggl(\sum_{j=1}^d B_j^*\sum_{k=1}^d B_k\biggr)\\
&= \tr\biggl(\sum_{j=1}^d B_j^*B_j + \sum_{1 \leq j \neq k \leq d}B_j^*B_k\biggr)  \\
&= \sum_{j=1}^d \norm{B_j}_F^2 + 2\tr\real\sum_{1 \leq j < k \leq d}B_j^*B_k.
\end{align*}
Therefore, it is sufficient to prove the following statement
\[
2\tr\real\sum_{1 \leq j < k \leq d}(B_j^*B_k - B_jB_k) \geq 0 .
\]
To do so, note that
\begin{align*}
2\tr \real & \sum_{1 \leq j < k \leq d}(B_j^*B_k - B_jB_k) \\
&= 
\sum_{1 \leq j < k \leq d}\bigl(\tr(B_j^*B_k) - \tr(B_jB_k) + \tr(B_k^*B_j) - \tr(B_k^*B_j^*)\bigr) \\
&= \sum_{1 \leq j < k \leq d}\tr\bigl((B_j - B_j^*)(B_k^* - B_k)\bigr) \\
&= 4\sum_{1 \leq j < k \leq d}\tr(\imag B_j\imag B_k),
\end{align*}
which is a non-negative number by assumption. 
\end{proof}

For a better understanding of the assumption \eqref{sumim} we present two lemmas.

\begin{lemma}\label{minus-semis}
Let $B_1, B_2, \dots , B_d$ be $n \times n$ complex matrices such that
$\imag B_j \leq 0$ for $j \in \{1, 2, \dots , d\}$.
Then 
$$
\sum_{1 \leq j < k \leq d}\tr(\imag B_j\imag B_k)\geq 0.
$$
\end{lemma}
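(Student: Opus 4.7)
The plan is to observe that the summands $\tr(\imag B_j\,\imag B_k)$ are individually non-negative, so that their sum is clearly non-negative as well.

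First I would record that for every $j$ the matrix $\imag B_j = \tfrac{1}{2\ii}(B_j-B_j^*)$ is Hermitian, so the hypothesis $\imag B_j\le 0$ genuinely makes sense and says that $-\imag B_j$ is positive semidefinite. Thus the problem reduces to the following standard fact about Hermitian matrices: if $X,Y\in\CC{n}$ are both positive semidefinite, then $\tr(XY)\ge 0$.

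The way I would justify this auxiliary fact is to write $X = X^{1/2}X^{1/2}$ with $X^{1/2}$ Hermitian and positive semidefinite, so that by cyclicity of the trace
\[
\tr(XY) = \tr(X^{1/2}YX^{1/2}),
\]
and $X^{1/2}YX^{1/2}$ is positive semidefinite (it is congruent to $Y\ge 0$). Its trace, being the sum of its eigenvalues, is therefore non-negative. Applying this with $X=-\imag B_j$ and $Y=-\imag B_k$ yields $\tr(\imag B_j\,\imag B_k)\ge 0$ for every pair $j<k$, and summing finishes the proof.

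There is no real obstacle here; the only thing to be careful about is the sign bookkeeping (the two minus signs from $-\imag B_j$ and $-\imag B_k$ cancel), and ensuring that the PSD trace inequality is invoked cleanly. The argument is a two-line computation once the reduction to the trace-of-product-of-PSDs inequality is made explicit.
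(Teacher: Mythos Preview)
Your proof is correct and follows essentially the same approach as the paper: both reduce to showing $\tr(XY)\ge 0$ for positive semidefinite $X,Y$ via the identity $\tr(XY)=\tr(X^{1/2}YX^{1/2})$ with $X=-\imag B_j$, $Y=-\imag B_k$. The only difference is cosmetic---you invoke cyclicity of the trace explicitly, whereas the paper writes the conjugated form directly.
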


\begin{proof}
Observe that 
\[
\tr(\imag B_j\imag B_k) =
\tr\bigl(
\sqrt{-\imag B_j} (-\imag B_k) \sqrt{-\imag B_j}
\bigr) \geq 0
\]
as the matrix $\sqrt{-\imag B_j}(-\imag B_k)\sqrt{-\imag B_j}$ is positive semi-definite
due to the assumption that $\imag B_j$ and $\imag B_k$ are both nonpositive semi-definite matrices.
    
\end{proof}

The following elementary proposition elaborates on the expression
in the assertion of the previous lemma.
\begin{proposition}\label{elem}
Let $B_1, B_2, \dots , B_d$ be $n \times n$ complex matrices, then
\begin{equation}\label{trImB}
    2\sum_{1 \leq j < k \leq d}\tr(\imag B_j\imag B_k)=\Big\| \sum_{j=1}^d \imag B_j\Big\| ^2_F-\sum_{j=1}^d \norm{\imag B_j}^2_F.
\end{equation}
In particular\begin{enumerate}[\rm (a)]
\item for rank-one matrices $B_j=u_jv_j^*$, $j=1,\dots, d$, one has
\begin{equation}\nonumber\label{trIm-uv}
2
\tr\big(\imag(u_j v_j^*) \imag (u_k v_k^*)\big)
= 
\real\Big(
 (u_j^*u_k) (v_k^*v_j )
- (v_j^*u_k)  (v_k^*u_j)
\Big);
\end{equation}
\end{enumerate}

\end{proposition}
\begin{proof}
The proof of identity \eqref{trImB} is a straightforward calculation.
For part (a) one uses the equalities of the form $\tr(v_kv_j^*u_ju_k^*+v_jv_k^*u_ku_j^*)=2\real(u_k^*v_kv_j^*u_j)$. Part (b) is immediate.

\end{proof}
\section{Scalar case}

First we present a weaker version of the stability  assumption.
Before we will use this weaker form extensively in our matrix versions, we discuss the scalar case in detail.

Let $p(\lambda)=\prod_{j=1}^d (1+b_j\lambda)$ be a scalar polynomial. Consider the condition
\begin{equation}\label{semis}
\sum_{1\leq j<k\leq d} \imag b_j \imag b_k
\geq 0.
\end{equation}
By elementary calculation, this is  equivalent to 
\begin{equation}\label{var-semis}
\operatorname{var} \imag \Lambda  \leq (d-1) (\operatorname{mean}\imag \Lambda) ^2, \quad \Lambda=\left(b_1,\dots b_d\right).
\end{equation}
Clearly, any polynomial stable with respect to the lower or upper half plane satisfies \eqref{semis}.

 \begin{theorem}\label{scalar+semis}
For a polynomial  $p(\lambda)=1+\sum_{j=1}^d a_j\lambda^d$ satisfying  \eqref{var-semis} the
inequality \eqref{dB} holds, i.e.
\[
|p(\lambda)| \leq
\exp\bigl(\real(a_1 \lambda)+ (|a_1|^2 - 2\real a_2)|\lambda|^2\bigr),\quad \lambda \in \Comp.
\]
\end{theorem}

\begin{proof}
The proof follows directly from the proof of \cite[Theorem 1.3]{Kne19},
it is also a consequence of Theorem~\ref{thm-fact1} below.
\end{proof}

For $p$ of degree not greater than $2$, \eqref{var-semis} implies stability with respect to the
upper or lower half-plane. 
For $d\geq 3$ this is a generalisation of Sz\'asz inequality onto a broader class of polynomials.
Below we provide an example of a polynomial satisfying \eqref{var-semis} which is stable with respect to 
no half-plane.
\begin{example}\label{ex:semis1}
Consider polynomials of the form 
\[ p_c(\lambda):= \Big(1+\frac i{c+1}\lambda\Big)\Big(1-\frac{i+1}{c}\lambda\Big)^c\Big(1-\frac{i-1}{c}\lambda\Big)^c,\]
where $c\in\BN$. The roots are clearly $\{(c+1)i, \frac{c(1-i)}2, \frac{c(-1-i)}2\}$ and are contained
in no half-plane $\{\lambda\in\BC\colon\real e^{\ii\theta} \lambda>0\}$, $\theta\in[0,2\pi)$. Condition \eqref{var-semis} can be readily verified, since among the $2c+1$ imaginary
parts involved there are $2c$ values $-\frac1c$ and a single value $\frac1{c+1}$. The sum in \eqref{semis}
equals $\frac{2c-1}{c}-\frac2{c+1}=\frac{2c^2-c-1}{c^2+c} \geq 0$.
Expanding $p_c(\lambda)=1+a_1\lambda+a_2\lambda+\ldots+a_{2c+1}\lambda^{2c+1}$ we have
$a_1=(\frac1{c+1}-2)i$ and $a_2=\frac{-2c}{c+1}$.
    
\end{example}

Note that \eqref{semis} can be satisfied even if the numbers of zeros on each side
of the real axis are equal.

\begin{example}\label{ex:semis2}
Choosing $k$ roots satisfying $\imag\frac1{\lambda_1}=\ldots=\imag\frac1{\lambda_k}=1$
and the other $k$, with $\imag\frac1{\lambda_{k+1}}=\ldots=\imag\frac1{\lambda_{2k}}=-\frac{k+\sqrt{2k-1}}{k-1}$
results in fulfilling \eqref{semis}. Indeed, the  left-hand-side is equal to
\begin{align*}
\binom k2 1^2 + &\binom k2 \bigg(\frac{k+\sqrt{2k-1}}{k-1}\bigg)^2 - k^2 \frac{k+\sqrt{2k-1}}{k-1}\\
&= \frac{k}{2}\bigg((k-1)+\frac{(k+\sqrt{2k-1})^2}{k-1}  \bigg) - \frac{k^3+k^2\sqrt{2k-1}}{k-1}\\
&=  \frac{k}{2}\frac{2k^2+2k\sqrt{2k-1}}{k-1} - \frac{k^3+k^2\sqrt{2k-1}}{k-1} = 0.
\end{align*}
\end{example}

We conclude the comment on the condition \eqref{var-semis} with the following property.
\begin{proposition}\label{pro-random}
Assume that we draw an infinite sequence $\{b_j\}_{j=1}^\infty$ of
complex numbers independently from a probability distribution on the complex plane
with a non-real mean and a finite variance.
Then with probability $1$ there exists $d\in\BN$ such that the polynomial
$p(\lambda)=\prod_{j=1}^d(1+\lambda b_j)$ satisfies 
the global Sz\'asz inequality \eqref{dB}.
\end{proposition}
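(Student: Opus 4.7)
The plan is to combine Corollary~\ref{scalar+semis} with the strong law of large numbers. Observe first that the nonzero roots of $p(\lambda)=\prod_{j=1}^d(1+\lambda b_j)$ are $\lambda_j=-1/b_j$, and hence $\imag(1/\lambda_j)=-\imag b_j$. Consequently, condition \eqref{semis} --- equivalent to \eqref{var-semis} by the derivation preceding Corollary~\ref{scalar+semis} --- reduces to
\[
S_d\ :=\ \sum_{1\leq j<k\leq d}\imag b_j\,\imag b_k\ \geq\ 0,
\]
where factors with $b_j=0$ simply contribute zero on both sides and thus may be harmlessly included. By Corollary~\ref{scalar+semis}, it therefore suffices to show that with probability one the event $\{S_d\geq 0\}$ occurs for some $d\in\BN$; in fact I will establish $S_d>0$ for every sufficiently large $d$, almost surely.

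Next I apply the elementary identity $2\sum_{j<k}x_jx_k=(\sum_jx_j)^2-\sum_j x_j^2$ with $x_j:=\imag b_j$, and normalise by $d^2$, obtaining
\[
\frac{2S_d}{d^2}\ =\ \Bigg(\frac{1}{d}\sum_{j=1}^d\imag b_j\Bigg)^{\!2}\ -\ \frac{1}{d}\cdot\frac{1}{d}\sum_{j=1}^d(\imag b_j)^2.
\]
The sequence $(b_j)$ is i.i.d.\ and $b_1$ has finite variance, hence so does $\imag b_1$; set $\mu:=\mathbb{E}[\imag b_1]=\imag\mathbb{E}[b_1]$ (nonzero by the non-real-mean hypothesis) and $m_2:=\mathbb{E}[(\imag b_1)^2]<\infty$. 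Kolmogorov's strong law of large numbers then yields
\[
\frac{1}{d}\sum_{j=1}^d\imag b_j\ \xrightarrow[d\to\infty]{\mathrm{a.s.}}\ \mu,\qquad
\frac{1}{d}\sum_{j=1}^d(\imag b_j)^2\ \xrightarrow[d\to\infty]{\mathrm{a.s.}}\ m_2,
\]
so that the displayed identity gives $2S_d/d^2\to\mu^2>0$ almost surely. In particular $S_d>0$ for all large $d$ on a probability-one event, which is stronger than needed.

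The argument is essentially a single appeal to SLLN, so there is no serious technical obstacle. The main conceptual point is to recognise that after the quadratic scaling by $d^2$ the quantity $S_d$ splits into a squared empirical mean converging to $\mu^2>0$ plus an empirical second-moment term damped by $1/d$; thus the non-reality of $\mathbb{E}[b_1]$ is precisely what forces $S_d$ to be eventually positive, which through Corollary~\ref{scalar+semis} guarantees the global Sz\'asz inequality \eqref{dB}.
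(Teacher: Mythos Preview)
Your proof is correct and follows essentially the same route as the paper: both reduce the claim to verifying condition \eqref{semis}/\eqref{var-semis} for all large $d$ via the strong law of large numbers, and then invoke Corollary~\ref{scalar+semis}. Your presentation through the identity $2S_d/d^2=(\tfrac1d\sum\imag b_j)^2-\tfrac1d\cdot\tfrac1d\sum(\imag b_j)^2$ is in fact slightly more direct than the paper's detour through the sample variance--mean formulation, but the substance is the same.
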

\begin{proof} We may assume without loss of generality that $b_j\neq 0$ for $j=1,\dots,d$
as the zero $b_j$'s do not contribute to the formula for $p(\lambda)$ and the mean of the underlying distribution is nonzero.  
With probability $1$ the value $\operatorname{var}(\imag b_1,\ldots,\imag b_d)$ tends to the variance
$\sigma^2>0$ of the imaginary part of the random variable.
Analogously, $\frac1d\imag\sum_{j=1}^d b_j$ tends to a nonzero value $\beta$ (imaginary part of the mean of
the random variable).
There exists $d_0\in\BN$ such that for $d\geq d_0$ we have simultaneously
\begin{align*}
\operatorname{var}(\imag b_1,\ldots,\imag b_d)\leq& \sigma^2+|\beta|/2,\\
\Big|\frac1d\imag\sum_{j=1}^d b_j\Big|\geq &|\beta|- |\beta|/2=|\beta|/2,\\
\sigma^2+|\beta|/2 \leq& ( d-1)(|\beta|/2)^2.
\end{align*}
Combining the above inequalities,  
we obtain
\begin{align*}
\operatorname{var}(\imag b_1,\ldots,\imag b_d) &
 \leq \sigma^2+|\beta|/2\\
&\leq ( d-1)(|\beta|/2)^2\\
&\leq ( d-1)\Big(\frac1{ d}\imag\sum_{j=1}^{ d} b_j\Big)^2.
\end{align*}
This shows that  \eqref{var-semis} holds, application of Corollary~\ref{scalar+semis} finishes the proof. 
\end{proof}

\section{Matrix polynomials in one variable}
In \cite{szymanski2023stability} the authors asked whether in \cite[Proposition 4.5.]{szymanski2023stability}
the assumption regarding numerical range can be replaced by hyperstability of $P$.
Let us recall that a matrix polynomial $P\in\CC{n}[\lambda]$ is called \emph{hyperstable} with respect to $D$
if for every $x\in \Comp^n\setminus\{0\}$ there exists $y\in \Comp^n\setminus\{0\}$ such that
$y^*P(\lambda)x\neq 0$ for any $\lambda\in D$. A~matrix polynomial with numerical range contained in a half-plane
is always hyperstable, but not conversely, cf.\ \cite{szymanski2023stability}.
We give a negative answer to the question above.

\begin{example}\label{ex:hyperstable}
Consider the matrix polynomial 
$$
P(\lambda)=\begin{bmatrix}
    1 & \lambda^d \\ 0 & 1
\end{bmatrix},
$$
with $d \geq 3$. Then $P(0)=I_2$, and $P(\lambda)$ is hyperstable with respect to any set $D\subseteq \Comp$ (cf. \cite[Proposition~3.3]{szymanski2023stability}). However,
$$
\norm{P(\lambda)} \geq \frac{1}{\sqrt 2} \norm{P(\lambda)}_F= \sqrt{1+\frac{|\lambda|^{2d}}{2}}.
$$
Hence, as $A_1 = A_2 = 0$, there can not be any global bound on  $\norm{P(\lambda)}$ of Sz\'asz type.

Let us also compute the numerical range of $P(\lambda)$.
Let $x \in \Comp^2\setminus\{0\}$. We have $x^*P(\lambda)x = |x_1|^2 + |x_2|^2 + \lambda^d\overline{x}_1 x_2$.
In particular, for $x_1 = x_2 = 1$, we see that the numerical range of $P(\lambda)$ is not contained in any half-plane. 
 \end{example}

The example above shows that hyperstability is not sufficient to obtain any bound depending only on $A_1$ and $A_2$.
On the other hand, requiring that the numerical range of the polynomial
lies in a half-plane, as in \eqref{lH}, is a strong assumption.
Therefore, we now provide a statement that requires a factorization $P(\lambda) = \prod_{j=1}^d(I + \lambda B_j)$,
instead of the numerical range condition. Note that the number of factors $d$ is not less than the degree of $P(\lambda)$.
The example $I_n + \lambda I_n = \prod_{j=1}^n (I_n + \lambda e_j e_j^*)$ shows that the number of factors might be
larger than the degree ($e_j$'s are vectors of the canonical basis in $\Comp^n$).

In Section~\ref{sec:factor} we will connect the  factorisation into  degree one factors with a system realisation result. From other related results on factorisation into degree one factors, we mention the relation to the numerical range by Li--Rodman \cite[Section 3]{LiR94}  and  a sufficient condition 
by Krupnik  \cite{krupnik}.


\begin{theorem}\label{thm-fact1} 
Let $P(\lambda) = I + \sum_{j=1}^N \lambda^j A_j
\in \CC{n}[\lambda]$ be a matrix polynomial with the following factorisation
$P(\lambda) = \prod_{j=1}^d(I + \lambda B_j)$, where  
\begin{equation}\label{M-semis}
\sum_{1\leq j<k\leq d}\tr(\imag B_j\imag B_k) \geq 0.\end{equation}
Then the Frobenius norm of
$P(\lambda)$ can be estimated from above as
\begin{equation}\label{Sz2}
\norm{P(\lambda)}_F \leq
n^{\frac d2}\exp\Bigl(\frac{1}{n}\tr\real(\lambda A_1)
+ \frac{1}{2n}\big(\norm{A_1}_F^2 - 2\tr\real A_2\big)|\lambda|^2\Bigr), 
\quad \lambda \in \Comp.
\end{equation}
\end{theorem}
 According to Lemma~\ref{minus-semis},
if $\imag B_j\leq 0$ for all $j=1,\ldots,d$ then the assumption \eqref{M-semis} is satisfied.
Moreover, the eigenvalues of the polynomial $P$ are then precisely of the form $-\frac1{\lambda}$
for $\lambda\in\sigma(B_j)$.  Hence,   $P$ is stable (with respect to the open half-plane).

\begin{proof}
First, we use Lemma~\ref{mlog}:
\begin{align}\label{nowenowe}
\log\norm{P(\lambda)}_F &\leq \sum_{j=1}^d\log\norm{I + \lambda B_j}_F \\
\nonumber &= d\log\sqrt{n} + \sum_{j=1}^d\log\norm{\frac{1}{\sqrt{n}}I + \lambda \tilde{B}_j}_F\\
\nonumber & \leq \log n^{\frac d2} + \frac{1}{\sqrt{n}}\sum_{j=1}^d \tr\real(\lambda \tilde{B}_j)
    + \frac{1}{2}\sum_{j=1}^d \norm{\lambda \tilde{B}_j}_F^2\text{,}
\end{align}
where $\tilde{B}_j := n^{-1/2}B_j$ for $j \in \{1,2, \dots , d\}$.
Since $I + \sum_{j=1}^d \lambda^j A_j = \prod_{j=1}^d(I + \lambda B_j)$,
we have $\sum_{j=1}^d \tilde{B}_j = n^{-1/2}A_1$ and $\sum_{1 \leq j < k \leq d}\tilde{B}_j\tilde{B}_k = n^{-1}A_2$.
Then, the right side of the last inequality can be rewritten and estimated
by Lemma~\ref{sums}  as follows:
\begin{align*}
\log n^{\frac d2} +& \frac{1}{n}\tr\real(\lambda A_1) + \frac{1}{2}|\lambda|^2\sum_{j=1}^d\norm{\tilde{B}_j}_F^2 \\
\leq&\log n^{\frac d2} + \frac{1}{n}\tr\real(\lambda A_1) + \frac{1}{2n}(\norm{A_1}_F^2 - 2\tr\real A_2)|\lambda|^2.
\end{align*}
Therefore, we obtain
\[
\log\norm{P(\lambda)}_F \leq \log n^{\frac d2} + \frac{1}{n}\tr\real(\lambda A_1) + \frac{1}{2n}(\norm{A_1}_F^2 - 2\tr\real A_2)|\lambda|^2.
\]
Taking exponent of both sides of the above inequality ends the proof.
\end{proof}

Below we present an example of a sequence of matrix polynomials $P_k(\lambda)$ of the same size $n\times n$
but increasing degree, based on the scalar polynomials in  \cite[Proof of Theorem 1.3]{Kne19}. 
The aim of this example is the following.
Note that the outcome inequality in Theorem~\ref{thm-fact1} has a factor  $n^{\frac d2}$ in front of the exponent.
We will show that this  factor needs to depend on high powers of $n$.

  \begin{example}\label{cmv}\rm 
Take two matrices
of size $n \times n$:
$A_1 = [1]_{s,t=1}^n$ and $A_2:=-I_n$.
Keeping the analogy with ~\cite{Kne19} we have:
\[
\Gamma := (A_1^2 - 2A_2)/2 = I + \frac{n}{2}A_1
\]
and
\begin{eqnarray*}
D_k := \Gamma - A_1^2/(2k) = I + \frac{n(k-1)}{2k}A_1\text{.}
\end{eqnarray*}
Now, let us define
$$
P_k(\lambda) :=
\biggl(I + \frac{A_1\lambda}{k}\biggr)^k\biggl(
I + \frac{\sqrt{D_k}\lambda}{\sqrt{k}}\biggr)^k\biggl(I - \frac{\sqrt{D_k}\lambda}{\sqrt{k}}
\biggr)^k
$$
and observe it satisfies the assumptions of Theorem~\ref{thm-fact1} ($d=3k$,
$A_1$ and $A_2$ equal the respective coefficients of $P_k$).
Using elementary calculations (see \cite[Example~4.7]{szymanski2024phd} for technical details) we obtain

\begin{equation}\label{mlim}
\lim_{k \to \infty}\norm{P_k(\ii y)}_F
= \Bigl[e^{2y^2}\Big(n-1+\big|e^{\ii yn}e^{y^2n^2/2}\big|^2\Big)\Bigr]^{1/2}= 
e^{y^2}(e^{n^2y^2} + n - 1)^{1/2}.
\end{equation}
Suppose that 
\[
\norm{P(\lambda)}_F \leq
c(n)\exp\Bigl(\frac{1}{n}\tr\real(\lambda A_1)
+ \frac{1}{2n}\big(\norm{A_1}_F^2 - 2\tr\real A_2\big)|\lambda|^2\Bigr), 
\quad \lambda \in \Comp,
\]
with some $c(n)$ depending on $n$ only. Setting $\lambda=\ii y$ we receive
\begin{equation}\label{mlim-e}
\norm{P_k(\ii y)}_F \leq
c(n)\exp\Big(
\frac1n\real(n \ii y) + \frac1{2n}(n^2+2n)|\ii y|^2 )\Big)= e^{(\frac n2+1)y^2},\quad y>0. 
\end{equation}

Then from \eqref{mlim-e} and \eqref{mlim} we receive
$$
c(n)\geq \frac{ e^{y^2}(e^{n^2y^2} + n - 1)^{1/2}}{e^{(\frac n2+1)y^2} }\geq \frac{ e^{y^2+ ny} }{e^{(\frac n2+1)y^2} } ,\quad y>0.
$$
Setting $y=1$ we obtain
\[ c(n)\geq e^{n/2}. \]

\end{example}

\section{Matrix polynomials and determinantal representation}\label{sec:factor} 

Recall that the complete factorisation of rational functions into elementary
factors (i.e., factors of the form ($I-\frac{1}{\lambda-\alpha}u_jv_j^*$)  
is fully characterised in terms of system realisation, see \cite[Theorem 10.5]{bart2007factorization}. Applying that to the  rational function $W(\lambda):=P(\lambda^{-1})$, where  $P(\lambda)=I+\sum_{k=1}^N\lambda^k A_k$, we obtain the   the following. 
\begin{proposition}\label{rev-elementary}
    Let $P(\lambda)=I+\sum_{j=1}^N\lambda^j A_j\in\CC{n}[\lambda]$ be matrix polynomial
with $A_N\neq 0$ and $\det(P(\lambda))$ not identically equal to $0$.
Fix $d\in\BN$.
Then the following conditions are equivalent:
\begin{enumerate}[\rm (i)]
\item $P(\lambda)=\prod_{j=1}^d(I+\lambda B_j)$ where each $B_j\in\CC{n}$ $(j=1,\dots,d)$ is of rank one;
\item $P(\lambda)$ has a realisation $P(\lambda)=I+\lambda C(I-\lambda A)^{-1} B$ such that $A$ is strictly upper-triangular, $A-BC$ is lower-triangular,
$A\in\CC{d}$, $B\in\BC^{d\times n}$, and $C\in\BC^{n\times d}$.
\end{enumerate}
Furthermore, if {\rm (ii)} holds, then denoting 
\begin{equation}\label{BC}
B=\begin{bmatrix} v_1^*\\ \vdots \\ v_d^* \end{bmatrix},\quad C= \begin{bmatrix} u_1 & \cdots & u_d\end{bmatrix}
\end{equation}
one has 
\begin{equation}\label{A}
A=\begin{bmatrix}  
0 & v_2^*u_1 & v_3^* u_1 & \cdots & v_d^*u_1 \\  
& 0 & v_3^* u_2 & \cdots & v_d^*u_2 \\
\vdots&&\ddots&\ddots&\vdots  \\
&&&0& v_d^* u_{d-1}\\
0&  &\cdots& &0\\\end{bmatrix}
\end{equation}
the matrices $B_j$ in (i) are given by $B_j=u_jv_j^*$, $j=1,\dots,d$, and $d\geq N$.
\end{proposition}
\begin{proof}
Proof of the implication $\rm(i)\Rightarrow(ii)$ follows from
\cite[Theorem 10.5]{bart2007factorization} applied to the rational function
$W(\lambda)=P(\lambda^{-1})$.
For the converse, we have to justify that $W(\lambda)$ does not
admit an elementary factor $(I+\frac 1{\lambda-\alpha}B_j)$ with
$\alpha\neq 0$. The proof of \cite[Theorem 10.5]{bart2007factorization}
shows that each $\alpha$ is a diagonal entry of $A$, hence
all the elementary factors of $W(\lambda)$ are indeed of the form $(I+\frac{1}{\lambda}B_j)$.
\end{proof}




Below $X\odot Y$ stand for the Hadamard product of two matrices.
\begin{theorem}\label{ABCISzasz}
Let $P(\lambda)\in\CC{n}[\lambda]$ be a matrix polynomial having a realisation as in  {\rm(ii)}
of Proposition~\ref{rev-elementary}. If 
\begin{equation}\label{elementary-pos}
\norm{CB}_F^2 +\real\tr\big((BC)\odot(BC)\big)\geq \tr\big((C^*C)\odot(BB^*)\big)+\real\tr\big((BC)^2\big)
\end{equation}
then the Sz\'asz inequality
\begin{equation}\label{SzABC}
\norm{P(\lambda)}_F \leq
n^{\frac d2}\exp\Bigl(\frac{1}{n}\tr\real(\lambda CB)
+ \frac{1}{2n}\big(\norm{CB}_F^2 + 2\tr\real (CAB)\big)|\lambda|^2\Bigr), 
\quad \lambda \in \Comp,
\end{equation}
holds. 
\end{theorem}
\begin{proof}
Knowing that $B^*=\begin{bmatrix}v_1&\ldots&v_d\end{bmatrix}$ and
$C=\begin{bmatrix}u_1&\ldots&u_d\end{bmatrix}$, by a direct computation
one obtains the following identities:
\begin{align*}
\tr( BC)^2 &=\sum_{1\leq j,k\leq d} (v_j^*u_k)(v_k^*u_j),\\
\norm{CB}_F^2=\tr(C^*CBB^*) &=\sum_{1\leq j,k\leq d} (u_k^*u_j)(v_j^*v_k).
\end{align*}
In order to exclude entries with $j=k$ we subtract traces of respective
Hadamard products. Then  Proposition~\ref{elem}~(a) shows that \eqref{elementary-pos} is equivalent to  \eqref{M-semis}.

Thus \eqref{M-semis} holds and application of Theorem~\ref{thm-fact1} ends the proof for \eqref{SzABC} is exactly \eqref{Sz2} written in terms of the
realisation.
\end{proof}

We present now two simple instances in which the  condition \eqref{elementary-pos} is satisfied.

\begin{corollary}
Let $P(\lambda)$ has a realisation $P(\lambda)=I+\lambda C(I-\lambda A)^{-1} B$  with $A,B,C$  given by \eqref{A} and \eqref{BC} respectively.
If, additionally, $C=\ii B^*+\Delta$ with $\Delta\in \Comp^{n\times d}$ of sufficiently small norm, then the Sz\'asz inequality \eqref{SzABC} holds.
\end{corollary}

\begin{proof}
In this setting, the statement (ii) of Proposition~\ref{rev-elementary} can be readily verified.
For $C=\ii B^*$ note that $B_j= \ii v_j v_j^*$, $j=1,\dots,d$. Hence \eqref{M-semis} is satisfied with a strong inequality.
This is equivalent to saying that \eqref{elementary-pos} holds with a strong inequality.
Adding a sufficiently small perturbation $\Delta$ preserves the inequality \eqref{elementary-pos} and in consequence proves \eqref{SzABC}. 
\end{proof}

\begin{corollary}
Let $P(\lambda)$ has a realisation $P(\lambda)=I+\lambda C(I-\lambda A)^{-1} B$
with $A,B,C$  given by \eqref{A} and \eqref{BC} respectively.
If, additionally, the entries of matrices $B$ and $\ii C$ are nonnegative
real numbers, then the Sz\'asz inequality \eqref{SzABC} holds.
\end{corollary}

\begin{proof}
Recall that $\tr(XY)\geq \tr (X\odot Y)$ for matrices with nonnegative entries. 
Taking $X=C^*C$, $Y=B^*B$ we receive
$$
\tr( C^*C BB^*) \geq \tr\big((C^*C)\odot(BB^*)\big).
$$
Taking $X=Y=\ii BC$ we receive
$$
\real\tr\big((BC)\odot(BC)\big)\geq \real\tr\big((BC)^2\big),
$$    
hence, \eqref{elementary-pos} is satisfied and in consequence \eqref{SzABC} holds.
\end{proof}

\section{Functional calculus in one variable}
In this section we prove inequalities \eqref{tri}, \eqref{sqrti} and \eqref{ui} providing bounds
for the norm of the scalar polynomial evaluated on the matrix variable. 
In Proposition~\ref{pro-funct1} the estimation depends on both the size $n$ of a matrix and the degree $d$
of a polynomial $p\in\Comp[\lambda]$. The next result (Proposition~\ref{pro-funct2}) provides
the estimation dependent on the parameter $d$ only, while the last Theorem~\ref{thm-funct3}
contains the estimation independent on both parameters $n$ and $d$.
At the end of the section we provide examples comparing the estimates and the von Neumann inequality.

Recall  the introduced in the previous section quantities $\Lambda=( b_1,\dots,  b_d)$,
$\var\Lambda$, $\mean\Lambda$, where $-b_1^{-1},\dots,-b_d^{-1}$ are the roots of
$p(\lambda)=1 + \sum_{j=1}^d a_j\lambda^j $. Our aim is to highlight here the interplay
between the roots of $p(\lambda)$  and the eigenvalues of $A$. 
Writing $p(\lambda) = \prod_{j=1}^d (1 +  b_j \lambda)$, we have the following customary identities,
which will be of frequent use:
\begin{equation}\label{alphaeq}
    \sum_{j=1}^d  b_j  = a_1,\quad \sum_{1 \leq j<k \leq d}  b_j  b_k  = a_2,
\end{equation}
and, under the assumption of \eqref{var-semis},
\begin{equation}\label{alphaeq2}|a_1|^2 - 2\real a_2 \geq |a_1|^2-2\real \sum_{1\leq j<k\leq d} b_j \overline{ b_k }
= \sum_{j = 1}^d | b_j |^2 = d\mean( |\Lambda|^2)  > 0.
\end{equation}

\begin{theorem}\label{pro-funct1}
Let $p(\lambda) = 1 + \sum_{j=1}^d a_j\lambda^j =\prod_{j=1}^d (1 +  b_j \lambda)$
be a  scalar polynomial with $b_j$'s satisfying the location condition \eqref{var-semis}.
Then the Frobenius norm of  $p(A)$ for any $A\in\CC{n}$ can be estimated as
\begin{equation}\label{e1}
\norm{p(A)}_F \leq
n^{\frac d2}\exp\Bigl(\frac{1}{n}\tr\real(a_1 A)
+ \frac{1}{2n}(|a_1|^2 - 2\real a_2)\norm A _F^2\Bigr).\end{equation}

\end{theorem}
\begin{proof}
Assume that $A\neq -\frac1{ b_j}  I$, $(j=1,\dots, d)$ and  use  $d$-times Lemma~\ref{mlog}. We have 
\begin{align*}
\log\norm{p(A)}_F 
&\leq \sum_{j=1}^d \log\norm{I +  b_j  A}_F \\
&= d\log\sqrt{n} + \sum_{j=1}^d \log\norm{\frac{1}{\sqrt{n}}I + \tilde{b}_j A}_F \\
&\leq \log n^{\frac d2} + \frac{1}{\sqrt{n}}\sum_{j=1}^d \tr\real(\tilde{b}_j A)
    + \frac{1}{2}\sum_{j=1}^d \norm{\tilde{b}_j A}_F^2\\
    &= \log n^{\frac d2}+ \frac{1}{n}\tr\real(a_1 A) + \frac{1}{2}\norm A _F^2\sum_{j=1}^d |\tilde{b}_j|^2
\end{align*}
where $\tilde{b}_j := n^{-1/2} b_j $ for $j \in \{1, 2, \dots, d\}$ and the identities \eqref{alphaeq} 
were used in the last step. 
We use now the inequality  \eqref{scalarv}, which follows from the assumption \eqref{var-semis}, and  estimate  further as follows
\[
\log\norm{p(A)}_F \leq \log n^{\frac d2} + \frac{1}{n}\tr\real(a_1 A)
    + \frac{1}{2n}(|a_1|^2 - 2\real a_2)\norm A _F^2.
\]
Taking exponent on the both sides  ends the proof of \eqref{e1} for $A\neq -1/ b_j  I$,
$j=1,\dots, d$. By continuity, the inequality \eqref{e1} holds for any $A$.

\end{proof}

As a corollary, we obtain the following absolute bound. 

\begin{corollary}
Let $p(\lambda) = 1 + \sum_{j=1}^d a_j\lambda^j =\prod_{j=1}^d (1 +  b_j \lambda)$ be
a  scalar polynomial with $b_j$'s satisfying the location condition \eqref{var-semis}
and let $A\in \CC{n}$ be a matrix satisfying  

 \begin{equation}\label{eq:exp0}
 |a_1|^2  \norm{A}_F^2 + 2\real\Big(a_1 \tr A- a_2\norm{A}_F^2 \Big) \leq 0.
 \end{equation}
Then $\norm{p(A)}_F\leq n^{\frac d2}$.
\end{corollary}

In the next results we move on to estimating the operator norm of $p(A)$ in two different ways.

\begin{theorem}\label{pro-funct2}
Let $p(\lambda) = 1 + \sum_{j=1}^d a_j\lambda^j =\prod_{j=1}^d (1 +  b_j \lambda)$ be a  scalar polynomial such that $b_j$'s satisfy the location condition \eqref{var-semis}.
Then the induced two-norm of  $p(A)$ for any $A\in\CC{n}$ can be estimated as follows:
\begin{eqnarray}
\label{pro-funct2a}  \norm{p(A)}
  &\leq& \exp\Bigl( \norm A \sum_{j=1}^d |b_j| \Bigr)\\
\label{pro-funct2b}
  &\leq& \exp\Bigl(\norm A \sqrt{d(|a_1|^2 - 2\real a_2)}\Bigr),
\end{eqnarray}
and
\begin{eqnarray}\label{thm-funct3}
\norm{p(A)} &\leq& \exp\bigl(|a_1|\norm A  + {\textstyle\frac12} (|a_1|^2 -2 \real(a_2))\norm A^2\bigr).    
\end{eqnarray}
\end{theorem}
\begin{proof}

Applying \eqref{imm}
 $d$ times we obtain 
\begin{equation}
\log\norm{p(A)}
\leq
\sum_{j = 1}^d\log \norm{I +  b_j  A} 
\leq \sum_{j = 1}^d \norm{ b_j  A}
=\norm A \sum_{j = 1}^d | b_j | ,
\end{equation}
which shows \eqref{pro-funct2a}. The inequality of means and subsequently \eqref{scalarv} together with \eqref{alphaeq2} allow us to estimate the latter by
\begin{equation}\nonumber
\norm A \sum_{j = 1}^d | b_j |
\leq \norm A\Bigl(d\sum_{j = 1}^d | b_j |^2\Bigr)^{1/2} 
\leq \norm A\sqrt{d(|a_1|^2 - 2\real a_2)},
\end{equation}
which gives \eqref{pro-funct2b}. 

The proof of \eqref{thm-funct3} follows from the following version of the von Neumann inequality
\begin{equation}\label{vN}
 \norm{p(A)}\leq \sup_{|z|\leq \norm A} |p(z)|, 
\end{equation}
where $p$ is a polynomial and $A$ is an arbitrary  square matrix.
It can be easily derived from the usual von Neumann inequality by considering $\tilde p(z)=p(z\cdot\norm A)$ and  $\tilde A=A/\norm A$.
Using the scalar
Sz\'asz inequality \eqref{dB} we obtain
\begin{align*}
\norm{p(A)}&\leq \sup_{|z|\leq \norm A} |p(z)| \\
&\leq  \sup_{|z|\leq \norm A} \exp\bigl(\real (a_1z)  + \frac12 (|a_1|^2 -2 \real(a_2))|z|^2\bigr) \\
&= \exp\bigl(|a_1|\norm A  + \frac12 (|a_1|^2 -2 \real(a_2))\norm A^2\bigr).
\end{align*}

\end{proof}

Let us recall that for any matrix $A \in \CC{n}$ the relation
$\norm{A}\leq \norm{A}_F \leq \sqrt{n}\norm{A}$
between the operator norm and the Frobenius norm holds. 
Below, we make a comparison of the global bounds presented in this section.
The inequalities resulting from Theorems~\ref{pro-funct1} and ~\ref{pro-funct2} are listed below:
\begin{align}
\label{tri} \norm{p(A)}_F &\leq
    n^{\frac d2}\exp\Bigl(\frac{1}{n}\tr\real(a_1 A) + \frac{1}{2n}(|a_1|^2 - 2\real a_2)\norm{A}_F^2\Bigr)
    =:E_1, \\
\label{sqrti} \norm{p(A)} &\leq
    \exp\Bigl(\norm{A}\sqrt{d(|a_1|^2 - 2\real a_2)}\Bigr)
    =:E_2, \\
\label{ui} \norm{p(A)} &\leq
    \exp\bigl(|a_1|\norm{A} + \frac{1}{2}(|a_1|^2 -2 \real a_2)\norm{A}^2\bigr)
    =:E_3.
\end{align}
Below we provide three examples where each of these inequalities gives the best bound. 
 The drawback of \eqref{tri} lies in the dependence on $n$ and $d$. However, it appears that \eqref{tri}
can beat the von Neumann inequality in some cases, while the other two inequalities can not (see Example~\ref{ex:vN}). 
The key issue is here the term $\tr\real(a_1 A)$, which might be negative, if the zeros of $p(\lambda)$ and eigenvalues of $A$ are conveniently located.

\begin{example}\label{comparison}
All the examples are with $n=2$ and $d=3$.

\begin{enumerate}[\ a)]
\item $E_1 < E_2 < E_3$ implies that \eqref{tri}
provides the lowest estimation for $\norm{p(A)}$.
It is also evident that it gives then the best estimation
for  $\norm{p(A)}_F$.

To witness this scenario, take 
\begin{equation}\label{ex-pA-1}
p(\lambda) = -(\lambda - 1)^3
= -\lambda^3 + 3\lambda^2 - 3\lambda + 1,\qquad 
A = \begin{bmatrix} 2 & 0\\ 0 & 0
\end{bmatrix}.
\end{equation}
The polynomial $p$ is clearly stable with respect to the
upper half-plane and hence satisfies \eqref{var-semis}.
The right-hand-side values in \eqref{tri} -- \eqref{ui}
are now $E_1=2\sqrt{2}$, $E_2=e^6$ and $E_3=e^{12}$.

\medskip\item $\sqrt2E_2 < E_1 < E_3$ guarantees that
\eqref{sqrti} gives the lowest upper bound for $\norm{p(A)}$ and $\norm{p(A)}_F$ as well.

To obtain the above one can take
\[
p(\lambda) = -(\lambda - 1)^3
\text{ and }
A = \begin{bmatrix} -2 & 0\\ 0 & 0 \end{bmatrix}.
\]
The estimations from \eqref{tri} -- \eqref{ui} attain
$E_1=2e^6\sqrt{2}$, $E_2=e^6$ and $E_3=e^{12}$.

\medskip\item $\sqrt2 E_3 < \sqrt{2}E_2 < E_1$ is the case when
\eqref{ui} provides the best upper bound for $\norm{p(A)}_F$.
In consequence also the bound for $\norm{p(A)}$ is the sharpest.

The last ordering is attained when considering
\[
p(\lambda) = (\lambda - 1)^2(\lambda + 1) = \lambda^3 - \lambda^2 - \lambda + 1
\text{ and }
A = - I_2.
\]
The above polynomial is clearly stable with respect to the
upper half-plane.
From \eqref{tri} -- \eqref{ui} we obtain values
$E_1=2e^{5/2}\sqrt{2}$, $E_2=e^3$ and $E_3=e^{5/2}$.
\end{enumerate}

\end{example}

\begin{example}\label{ex:vN}
It is worth noting that a direct use of von Neumann inequality \eqref{vN} to the matrix and polynomial form \eqref{ex-pA-1}
gives a worse estimation, since
\[ \sup_{|\lambda|\leq \norm{A_1}} |p_1(\lambda)|=
\sup_{|\lambda|\leq 2} |p_1(\lambda)|=3^3  < 2\sqrt 2 = E_1. \]
Therefore \eqref{tri} provides a sharper bound for both
$\norm{p_1(A_1)}_F$ and $\norm{p_1(A_1)}$.

The inequality \eqref{ui} is proven using \eqref{vN}, hence cannot
provide a better estimation. For \eqref{sqrti} we have
\[
E_2 \geq \sup_{|\lambda|= \norm A} \norm{p(\lambda I)}
= \sup_{|\lambda|\leq \norm A} |p(\lambda)|.
\]
This argument actually shows that no function of $\norm A$ and $p$
can provide a lower estimation for $\norm{p(A)}$
that the von Neumann inequality.
\end{example}


\section{Other forms of functional calculus}
As one may observe, Theorem~\ref{thm-funct3} 
results from the linking of two inequalities: von Neuman and Sz\'asz.
Using this technique, one may obtain similar results in the multivariate case.
We list here some of them, the proofs follow the same steps as the proof of
Theorem~\ref{thm-funct3} above and are left to the reader.
We also present a few corollaries on univariate matrix
polynomials of a matrix variable.

\begin{remark}    
A Sz\'asz-type inequality for multivariate scalar polynomials can be found in the work of Borcea and Br\"and\'en
\cite{BorB09} and was improved by Knese \cite[Theorem 1.4]{Kne19} later on. %
Employing Ando's \cite{ando63} inequality we obtain the following estimation
in terms of $p_j:=\frac{\partial p}{\partial z_j}(0,0)$ and
$p_{j,k}:=\frac{\partial^2 p}{\partial z_j\partial z_k}(0,0)$
($1\leq j,k\leq d$). For any $p\in\Comp[z_1,z_2]$,
stable with respect to the open upper half plane with  $p(0,0)=1$ and
$A_1,A_2\in\CC{n}$ with $A_1A_2=A_2A_1$ we have 
\[
\norm{p(A_1,A_2)}\leq \exp \bigg(\!
\norm{A_1}|p_1|+\norm{A_2}|p_2|
+\frac 12\norm{\bf A}^2_\infty\Big(
\big|p_1+p_2 \big|^2
-\real\!\sum_{j,k=1}^2p_{j,k}
\Big)
\bigg),
\]
where $\norm{\bf A}_\infty:=\max\{\norm{A_1},\norm{A_2}\}$.
\end{remark}

\begin{remark}\label{Bre}
Using a subsequent result by Knese  \cite[Theorem 1.5]{Kne19} and Brehmer's \cite{Breh61} multivariate version of von Neumann's inequality (see also \cite{Lub78})
we receive the following. For  $p\in\Comp[z_1,\ldots,z_k]$  stable with respect to $H_0^k$
such that $p(\mathbf{0})=1$ and  $\mathbf{A}=(A_1,\ldots,A_k)\in(\CC{n})^k$  a commuting $k$-tuple we have
\[
\norm{p(\mathbf{A})}\leq \exp \Big[ \sqrt{2k} \norm{\nabla p(\mathbf 0)}\norm{\mathbf A}_2+
k\big(\norm{\nabla p(\mathbf 0)}^2 + \norm{ \real H_p(\mathbf 0) } \big) \norm{\mathbf A}_2^2
\Big],
\]
where $\norm{\mathbf A}_2=\sqrt{\sum_{j=1}^k\norm{A_j}^2}$.
\end{remark}

\begin{remark}
Another possibility is to use the  constants $C_{k,n}\geq 1$ introduced by Hartz, Richter and Shalit \cite{HarR22}, who showed that   for a commuting row contraction
$\mathbf T= (T_j)_{j=1}^k\in(\CC{n})^k$ (i.e.\ $\|\sum_{j=1}^n T_jT_j^*\|\leq 1$) the following inequality holds
\begin{equation*}\label{vN-Ckn}
|p(T_1,\ldots,T_k)| \leq C_{k,n} \sup\{|p(\boldsymbol z)|:\norm{\boldsymbol z}\leq 1, j=1,\ldots,k \}.
\end{equation*}
In consequence, for  $p\in\Comp[z_1,\ldots,z_k]$  stable with respect to $H_0^k$ and satisfying $p(\mathbf 0)=1$
a commuting tuple of matrices $\mathbf T\in(\CC{n})^k$ we have
\begin{equation*}\label{n-var-Ckn}
\norm{p(\mathbf A)}\leq C_{k,n}
\exp \Big[ \sqrt{2} \norm{\nabla p(\mathbf 0)}\norm{\mathbf A}_R+
\big(\norm{\nabla p(\mathbf 0)}^2 + \norm{ \real H_p(\mathbf 0) } \big) \norm{\mathbf A}_R^2
\Big],
\end{equation*}
where 
$$
\norm{\bf A}_R:=\sqrt{\Big\|\sum_{j=1}^k A_jA_j^*\Big\|}\leq 1,
$$
note that $\norm{\bf A}_R\leq \norm{\bf A}_2$.
Moreover, for $n\leq 3$, due to the results of Kosi\'nski \cite{Kosin} and Knese \cite{Kne16}  we have:
\[\norm{p(\mathbf A)}\leq \exp \Big[ \sqrt{2} \norm{\nabla p(\mathbf 0)}\norm{\mathbf A}_2+
\big(\norm{\nabla p(\mathbf 0)}^2 + \norm{ \real H_p(\mathbf 0) } \big) \norm{\mathbf A}_2^2.
\Big]
\]
In this case the bound is essentially better  than  the one obtained using Lubin's 
result in Remark~\ref{Bre} above. 
\end{remark}

\begin{remark}\label{pro-complete}
Now we present a complete version of the Sz\'asz inequality.
It is known (see e.g.\ \cite[p.54]{NagyFoiasBK2010}, \cite{BadBec14})
that the unit disc is a complete spectral set.
Together 
with Theorem~\ref{thm-fact1} it leads to the following. 
Let $P(\lambda) = I_n + \sum_{j=1}^N \lambda^j A_j
\in \CC{n}[\lambda]$ be a matrix polynomial with the following factorisation
$P(\lambda) = \prod_{j=1}^d(I_n + \lambda B_j)$ satisfying \eqref{M-semis}.
Then, for $T\in\CC{m}$, the following inequality holds
\[ 
\biggl\| I_n\otimes I_m + \sum_{j=1}^{N} A_j\otimes T^j \biggr\|\leq
n^{\frac d2}\exp\Bigl(\frac{1}{n}|\tr A_1| \norm{T}
+ \frac{1}{2n}\big(\norm{A_1}_F^2 - 2\tr\real A_2\big)\norm{T}^2\Bigr).
\]
\end{remark}

\begin{remark}\label{pro-mlak}

Another  inequality was shown by Mlak \cite{Mlak71}
(cf.\ \cite{BadBec14}).
If $P(\lambda)=A_0+A_1\lambda+\ldots+A_d\lambda^N \in \CC{n}[\lambda]$ is a matrix polynomial 
and a contraction $T\in\CC{n}$ doubly commutes with coefficients of $P$,
i.e.\ $TA_j=A_jT$, $TA_j^*=A_j^*T$ for $j=0,\ldots,N$, then defining 
$$
P(T):=A_0+A_1T+A_2T^2+\ldots+A_dT^N
$$
we have
\[ \norm{P(T)} \leq \sup\{\norm{P(z)}:|z|\leq 1\}.\]

Let $P(\lambda) = I_n + \sum_{j=1}^N \lambda^j A_j
\in \CC{n}[\lambda]$ be a matrix polynomial with the following factorisation
$P(\lambda) = \prod_{j=1}^d(I_n + \lambda B_j)$ satisfying \eqref{M-semis}.
For $T\in\CC{n}$ doubly commuting with every $A_j$ the norm of
$P(T)$ can be estimated from above as
\[
\norm{P(T)} \leq
n^{\frac d2}\exp\Bigl(\frac{1}{n}|\tr A_1|\norm{T}
+ \frac{1}{2n}\big(\norm{A_1}_F^2 - 2\tr\real A_2\big)\norm{T}^2\Bigr).
\]
 Note that if $T$ doubly commutes with $B_j$ for $j=1,\ldots,d$ then it doubly commutes
with all the $A_j$'s as well. 
\end{remark}

\begin{remark}
In \cite{hartz2023ax}, Hartz provides a matrix polynomial version of
von Neumann inequality, namely for $P(\lambda)=A_0+A_1\lambda +\ldots A_d\lambda^N\in\CC{n}[\lambda]$
and a contraction $T\in\CC{n}$ the following inequality holds
\begin{equation*}
\norm{P(T)}\leq \sqrt{N+1}\sup\{\norm{P(\lambda)}\colon |\lambda|\leq 1\}.
\end{equation*}
In consequence, we obtain the following.
Let $P(\lambda)=I_n+\sum_{j=1}^N A_j\lambda^j\in\CC{n}[\lambda]$ be a matrix polynomial
admitting factorisation $P(\lambda)=\prod_{j=1}^d(I_n+\lambda B_j)$ and satisfying the
condition \eqref{M-semis}. Then for $T\in\CC{n}$ (no assumption on commutation required)
we have the following estimation:
\[
\norm{P(T)} \leq
\sqrt{n^d(N+1)}\exp\Bigl(\frac{1}{n}|\tr A_1|\norm{T}
+ \frac{1}{2n}\big(\norm{A_1}_F^2 - 2\tr\real A_2\big)\norm{T}^2\Bigr).
\]
\end{remark}

\bibliographystyle{plain}
\bibliography{szw}

\begin{thebibliography}{10}

\bibitem{ando63}
T.~Ando.
\newblock On a pair of commutative contractions.
\newblock {\em Acta Sci. Math. (Szeged)}, 24:88--90, 1963.

\bibitem{BadBec14}
C.\ Badea and B.\ Beckermann.
\newblock Spectral sets.
\newblock In Leslie Hogben, editor, {\em Handbook of linear algebra. Second
  edition}, chapter~37. CRC Press, Boca Raton, FL, 2014.

\bibitem{BGK1979}
Harm Bart, Israel Gohberg, and Marinus~A. Kaashoek.
\newblock {\em Minimal factorization of matrix and operator functions},
  volume~1 of {\em Operator Theory: Advances and Applications}.
\newblock Birkh\"auser Verlag, Basel-Boston, Mass., 1979.

\bibitem{bart2007factorization}
Harm Bart, Israel Gohberg, Marinus~A Kaashoek, and Andr{\'e}~C.M. Ran.
\newblock {\em Factorization of matrix and operator functions: the state space
  method}, volume 178 of {\em Operator Theory: Advances and Applications}.
\newblock Birkhäuser Verlag, Basel, 2008.

\bibitem{bart1988complementary}
Harm Bart and H~Hoogland.
\newblock Complementary triangular forms of pairs of matrices, realizations
  with prescribed main matrices, and complete factorization of rational matrix
  functions.
\newblock {\em Linear Algebra and its Applications}, 103:193--228, 1988.

\bibitem{BorB09}
Julius Borcea and Petter Br\"and\'en.
\newblock The {L}ee-{Y}ang and {P}\'olya-{S}chur programs.\ {I}.\ {L}inear
  operators preserving stability.
\newblock {\em Inventiones mathematicae}, 177(3):541, 2009.

\bibitem{Breh61}
S.~Brehmer.
\newblock {\"U}ber vertauschbare {K}ontraktionen des {H}ilbertschen {R}aumes.
\newblock {\em Acta Sci. Math. (Szeged)}, 22:106--111, 1961.

\bibitem{deB61}
Louis de~Branges.
\newblock Some {H}ilbert spaces of entire functions. {II}.
\newblock {\em Transactions of the American Mathematical Society},
  99(1):118--152, 1961.

\bibitem{hartz2023ax}
Michael Hartz.
\newblock On von neumann’s inequality on the polydisc.
\newblock {\em Mathematische Annalen}, 2024.

\bibitem{HarR22}
Michael Hartz, Stefan Richter, and Orr~Moshe Shalit.
\newblock Von {N}eumann’s inequality for row contractive matrix tuples.
\newblock {\em Mathematische Zeitschrift}, 301:3877--3894, 2022.

\bibitem{Kne16}
G.~Knese.
\newblock The von {N}eumann inequality for $3\times 3$ matrices.
\newblock {\em Bull. Lond. Math. Soc.}, 48(1):53--57, 2016.

\bibitem{Kne19}
G.~Knese.
\newblock Global bounds on stable polynomials.
\newblock {\em Complex Analysis and Operator Theory}, 13(4):1895--1915, 2019.

\bibitem{Kosin}
\L{}ukasz Kosi\'{n}ski.
\newblock Three-point {N}evanlinna-{P}ick problem in the polydisc.
\newblock {\em Proc. Lond. Math. Soc. (3)}, 111(4):887--910, 2015.

\bibitem{krupnik}
I.\~N. Krupnik.
\newblock Decomposition of a matrix pencil into linear factors.
\newblock {\em Mat. Zametki}, 49(2):95--101, 159, 1991.

\bibitem{Lev80}
B.J. Levin.
\newblock {\em Distribution of zeros of entire functions}.
\newblock Number~5 in Translations of Mathematical Monographs. American
  Mathematical Society, Providence, RI, 1980.
\newblock Revised edition. Translated from Russian by R.P. Boas, J.M. Danskin,
  F.M. Goodspeed, J. Korevaar, A.L. Shields, H.P. Thielman.

\bibitem{LiR94}
C.-K. Li and L.~Rodman.
\newblock Numerical range of matrix polynomials.
\newblock {\em SIAM Journal on Matrix Analysis and Applications},
  15(4):1256--1265, 1994.

\bibitem{Lub78}
Arthur Lubin.
\newblock Research notes on von neumann's inequality.
\newblock {\em Internat. J. Math. Math. Sci.}, 1(1):133--135, 1978.

\bibitem{Mlak71}
W.~Mlak.
\newblock Note on the von {N}eumann inequality.
\newblock {\em Bull. Acad. Pol. Sci. Sér. Sci. Math. Astronom. Phys.},
  19:843--848, 1971.

\bibitem{NagyFoiasBK2010}
B.~Sz.-Nagy, C.~Foias, H.~Bercovici, and L.~Kérchy.
\newblock {\em Harmonic Analysis of Operators on Hilbert Space 2nd edition}.
\newblock Springer, New York, Dordrecht, Heidelberg, London, 2010.

\bibitem{szasz1943}
Otto Sz{\'a}sz.
\newblock On sequences of polynomials and the distribution of their zeros.
\newblock {\em Bulletin of the American Mathematical Society}, 49(6):377--383,
  1943.

\bibitem{szymanski2024phd}
Oskar~Jakub Szyma{\'n}ski.
\newblock {\em Stability theory for matrix polynomials in one and several
  variables with extensions of classical theorems}.
\newblock PhD thesis, Jegiellonian University, 2024.
\newblock Available at \url{https://ruj.uj.edu.pl/handle/item/462998}.

\bibitem{szymanski2023stability}
Oskar~Jakub Szyma{\'n}ski and Micha{\l} Wojtylak.
\newblock Stability of matrix polynomials in one and several variables.
\newblock {\em Linear Algebra and its Applications}, 670:42--67, 2023.

\end{thebibliography}

\end{document}